\newtheorem{theorem}{Theorem}
\begin{document}
\title{{\bf The Diophantine Equation}\\
\ \ \ \ \\ $\mathbf{\arctan \left(\dfrac{1}{x}\right) +
    \arctan \left(\dfrac{\ell}{y}\right) = \arctan \left(\dfrac{1}{k}\right)}$}
\author{Konstantine Zelator\\
Mathematics, Statistics, and Computer Science\\
212 Ben Franklin Hall\\
Bloomsburg University of Pennsylvania\\
400 East 2nd Street\\
Bloomsburg, PA  17815\\
USA\\
and\\
P.O. Box 4280\\
Pittsburgh, PA  15203\\
kzelator@bloomu.edu\\
e-mails: konstantine\underline{\ }zelator@yahoo.com}

\maketitle

\section{Introduction}  The subject matter of this work is the two-variable
diophantine equation $\arctan\left(\dfrac{1}{x}\right) +
\arctan\left(\dfrac{\ell}{y} \right) = \arctan \left(\dfrac{1}{k}\right)$ for
given positive integers $k$ and $\ell$, such that gcd $(\ell,k^2+1) =1$ (i.e.,
$\ell$ and $k^2+1$ are relatively prime).  The main objective is to determine
all positive integer pairs $(x,y)$ which satisfy

\begin{equation}
\left\{\begin{array}{l} \arctan\left(\dfrac{1}{x}\right) + \arctan
\left(\dfrac{\ell}{y} \right) = \arctan\left(\dfrac{1}{k}\right)\\
x,y \in \mathbb Z^+,\ {\rm gcd}(\ell,k^2+1)=1\ {\rm and} \\
{\rm with\ gcd}\ (\ell, y) =1\ \left({\rm i.e.,}\ \ell\  {\rm and}\ y\  
{\rm are}\right.\\
\left.{\rm relatively \ prime}\right) 
\end{array} \right\} \label{E1}
\end{equation}

\noindent This is done in Theorem 1, Section 4.  As we will see, there are
exacgtly $N$ distinct solutions to (\ref{E1}) where $N$ is the number of
positive divisors of the integer $k^2+1$.  The $N$ pairs $(x,y)$, which are
solutions to (\ref{E1}), are expressed parametrically in terms of the positive
divisors of $k^2+1$.  Also, note that when $\ell =1$, equation (\ref{E1}) is
symmetric with respect to the two variables $x$ and $y$.  If $(a,b)$ is a
solution, then so is $(b,a)$.  The motivating force behind this work is a
recent article published in the journal {\it Mathematics and Computer
  Education} (see \cite{1}).  The article, authored by Hasan Unal, is entitled
``Proof without words: an arctangent equality''.  It consists of four
illustrations, a purely geometric proof of the equality,

$$\arctan\left(\dfrac{1}{3} \right) + \arctan\left(\dfrac{1}{7}\right) =
\arctan \left( \dfrac{1}{2}\right).
$$

\noindent From the point of view of (\ref{E1}), the last equality says that
the pair $(3,7)$ is a solution of (\ref{E1}), in the case $\ell=1$ and $k=2$.

According to Theorem 1, $(3,7)$ and $(7,3)$ are the only solutions to
(\ref{E1}) for $\ell =1$ and $k = 2$. 

This, then, is the other objective of this article.  To generate more
arctangent type of equalities.  This is done in Section 5, where a listing of
such equalities is offered; an immediate consequence of Theorem 1.

In Section 2, we list two trigonometric preliminaries: the well known
identity for the tangent of the sum of two angles and a couple of basic facts
regarding arctangent function.  

In Section 3, we state two well known results from number theory: Euclid's
lemma; and  the formula that gives the number of positive divisors of a
positive integer. We use these in the proof of Theorem 1.

\section{Trigonometric preliminaries}

\begin{enumerate}
\item[(a)]  If $\theta_1$ and $\theta_2$ are two angles measured in radians,
  such that neither $\theta_1 $ nor $\theta_2$, nor their sum $\theta_1 +
  \theta_2$ is of the form $k\pi + \dfrac{\pi}{2},\ k$ and integer.  Then,

$$
\tan (\theta_1+\theta_2) = \dfrac{\tan \theta_1 + \tan \theta _2}{1-\tan
  \theta_1\theta_2}
$$

\item[(b)]  Let $f$ be the arctangent function, $f(x) = \arctan x$.  Then,

\begin{enumerate} 

\item[(i)] $\arctan 1 = \dfrac{\pi}{4}$

\item[(ii)]  $\left\{ \begin{array}{lr} 0 < \theta < \dfrac{\pi}{r} & \\ 
{\rm and} & \\
& \theta = \arctan c\end{array}\right\} \Leftrightarrow \left\{
\begin{array}{l} 0 < \theta < \dfrac{\pi}{4} \\
\\
0 < c = \tan \theta < 1\end{array} \right\}$

\item[(iii)]  $\left\{ \begin{array}{ll}  0 < \theta < \dfrac{\pi}{2} \\
{\rm and} & \\
& \theta = \arctan c\end{array}\right\}  \Leftrightarrow \left\{
\begin{array}{l} 0 < \theta < \dfrac{\pi}{2} \\
\\
0 < c = \tan \theta \end{array}\right\}$

\end{enumerate}

\end{enumerate}

\section{Number theory preliminaries}

The following result is commonly known as Euclid's lemma, and is of great
significance in number theory.

\vspace{.15in}

\noindent {\bf Result 1 (Euclid's lemma):}  {\it Let $a,b,c$ be positive
 integers such that $a$ is a divisor of the product $bc$; and with $a$ also
  being relatively prime to $b$. Then, $a$ is a divisor of $c$.}

\vspace{.15in}

The next result provides a formula that gives the exact number of positive
divisors of a positive integer.

\vspace{.15in}

\noindent {\bf Result 2 (number of divisors formula)}  {\it Let $n\geq 2$ be a
  positive integer, and let $p_1  , \ldots , p_t$ in increasing order, be the
  distinct prime bases that appear in the prime factorization of $n$, so
  that $n= p^{e_1}_1, \ldots p^{e_t}_t$, with the exponents $e_1, \ldots , e_t$
  being positive integers.  Also, let $N$ be the number of positive divisors of
  $n$.  Then,

\begin{enumerate} \item[(i)] $N = \overset{t}{\underset{i=1}{\Pi}} (e_i+1)
  \ldots (e_1+1) \ldots (e_t + 1)$.
\item[(ii)] In particular, when $e_1 = \ldots = e_t =1$ (i.e., when $n$ is
  squarefree)
\end{enumerate}}

$$N=2^t
$$

\noindent Both of these two results can be easily found in number theory books
and texts.  For example, see reference \cite{2}.

\section{Theorem 1 and its proof}

\begin{theorem}  Let $k$ and $\ell$ be fixed or given positive integers such
  that \linebreak
gcd$(\ell,k^2+1)=1$.  Consider the diophantine equation (\ref{E1}).

\begin{enumerate}
\item[(a)] There are exactly $N$ distinct positive integer pairs $(x,y)$ which
  are solutions to equation (\ref{E1}) where $N$ is the number of positive
  integer divisors of the integer $k^2+1$.  Specifically, if $(x,y)$ is a
  positive integer solution of (\ref{E1}), then

$x=k+\ell \cdot \left(\dfrac{k^2+1}{d}\right)\ {\rm and}\ y = k\ell + d$ where
  $d$ is a positive integer divisor of $k^2+1$.

\item[(b)] If $k^2+1 = p$, a prime number, then equation (\ref{E1}) has exactly
  two distinct positive integer solutions.  These are

$$(x,y) = (k + \ell (k^2+1),\ k\ell + 1),\ \ (k+\ell,\  k\ell + k^2 +1).
$$

\item[(c)]  If $k^2+1 = p_1p_2$, a product of two distinct primes $p_1$ and
  $p_2$,  equation (\ref{E1}) has exactly four distinct positive integer
  solutions.  These are,

$$\begin{array}{ll}
(x,y)=(k+\ell(k^2+1),k\ell +1), & (k+\ell,\ k\ell + k^2 +1 ),\\
\\ 
(k+\ell p_2.\ k\ell + p_1), \ {\rm and} & {\and}\ (k+\ell p_1,\ k\ell + p_2)
\end{array}
$$
\end{enumerate}
\end{theorem}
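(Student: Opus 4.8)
The plan is to convert the given arctangent equation into an ordinary polynomial Diophantine equation by taking tangents, then to factor it and apply Euclid's lemma together with the divisor-counting formula. First I would observe that for positive integers $x,y,k$ each of the angles $\arctan(1/x)$, $\arctan(\ell/y)$, $\arctan(1/k)$ lies strictly between $0$ and $\pi/2$, so that trigonometric preliminary (a) applies to $\theta_1 = \arctan(1/x)$ and $\theta_2 = \arctan(\ell/y)$ (their sum lies in $(0,\pi)$, and once we know its tangent is finite it is not of the form $m\pi + \pi/2$). Taking the tangent of both sides and clearing denominators turns the equation into
\[
k(\ell x + y) = xy - \ell ,
\]
equivalently $xy - k\ell x - ky - \ell = 0$; completing the rectangle by adding $k^{2}\ell$ to both sides factors this as
\[
(x-k)(y-k\ell) = \ell(k^{2}+1).
\]

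Next, writing $A = x-k$ and $B = y-k\ell$, we have $AB = \ell(k^{2}+1) > 0$, so $A$ and $B$ share a sign. I would rule out $A<0$, $B<0$: then $|A| = k-x \le k-1$, forcing $|B| = \ell(k^{2}+1)/|A| \ge \ell(k^{2}+1)/(k-1) > k\ell$, contradicting $|B| = k\ell - y < k\ell$ (and for $k=1$ there is simply no positive integer $x<k$ at all). Hence $A,B$ are positive integers. Since $y = k\ell + B$, the hypothesis $\gcd(\ell,y)=1$ is equivalent to $\gcd(\ell,B)=1$. Because $\gcd(\ell,k^{2}+1)=1$, Euclid's lemma (Result 1) applied to $B\mid \ell(k^{2}+1)$ with $\gcd(B,\ell)=1$ gives $B\mid k^{2}+1$; conversely every positive divisor $d$ of $k^{2}+1$ is automatically coprime to $\ell$. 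Therefore $B=d$ runs over precisely the positive divisors of $k^{2}+1$, and $A = \ell(k^{2}+1)/d$, which yields $x = k + \ell(k^{2}+1)/d$ and $y = k\ell + d$, exactly as claimed in part (a).

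Finally I would verify the converse and count. For each divisor $d$ of $k^{2}+1$ the pair $(x,y) = \big(k + \ell(k^{2}+1)/d,\ k\ell + d\big)$ satisfies $(x-k)(y-k\ell) = \ell(k^{2}+1)$ by construction, hence the tangent identity. Because $x = k + \ell(k^{2}+1)/d \ge 1+\ell > \ell$, we get $xy > \ell$, so $\tan\!\big(\arctan(1/x)+\arctan(\ell/y)\big) = 1/k$ is finite and positive; the sum of two angles in $(0,\pi/2)$ lies in $(0,\pi)$, and having positive tangent it must lie in $(0,\pi/2)$, where $\tan$ is injective, so the sum equals $\arctan(1/k)$ by preliminary (b)(iii). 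Thus each such $(x,y)$ is a genuine solution, and distinct $d$ give distinct $y = k\ell + d$, so the solutions are pairwise distinct; by Result 2 there are exactly $N$ of them, proving (a). Parts (b) and (c) are then immediate specializations: for $k^{2}+1 = p$ the divisors are $d\in\{1,p\}$, and for $k^{2}+1 = p_1p_2$ they are $d\in\{1,p_1,p_2,p_1p_2\}$; substituting into the formulas for $x$ and $y$ produces the listed pairs.

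The main obstacle I anticipate is not the algebra but pinning down the exact equivalence between the transcendental equation and its polynomial shadow: the forward direction (equal arctangents $\Rightarrow$ equal tangents) is trivial, but the converse requires the range argument above to exclude the possibilities that the sum of the two left-hand angles equals $\pi/2$ or lies in $(\pi/2,\pi)$, and the sign analysis of the pair $(A,B)$ must be handled carefully — especially the small-$k$ edge cases — so that no spurious solutions are admitted and none are lost.
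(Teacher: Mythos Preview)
Your proof is correct and follows the same overall strategy as the paper: take tangents via the addition formula, reduce to a polynomial Diophantine equation, invoke Euclid's lemma using $\gcd(\ell,y)=1$, and parametrize the solutions by the positive divisors of $k^{2}+1$. The one noteworthy algebraic difference is your use of the ``complete the rectangle'' factorization $(x-k)(y-k\ell)=\ell(k^{2}+1)$, after which Euclid's lemma is applied directly to $B=y-k\ell$; the paper instead stops at $y(x-k)=\ell(1+kx)$, applies Euclid's lemma to $y\mid(1+kx)$, introduces an auxiliary quotient $v=(1+kx)/y$, and back-substitutes to reach $k^{2}+1=(y-k\ell)v$. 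Your route is a bit tidier and makes the structure of the solution set visible in one line; it also forces you to dispose of the $A,B<0$ branch explicitly, whereas in the paper's version positivity of $v$ and hence of $x-k$ and $y-k\ell$ falls out automatically from $1+kx>0$. You are also more careful than the paper about the converse range argument (showing the sum of the two arctangents actually lands in $(0,\pi/2)$ rather than merely having the correct tangent), which the paper handles only for the specific parametrized pairs by bounding each angle below $\pi/4$.
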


\begin{proof} First note that parts (b) and (c) are immediate  consequences of
  part (a) and Result 2.  We omit the details.  We prove part (a)

\begin{enumerate}
\item[(a)]  Let $d$ be a positive integer divisor of $k^2+1$.  We will show
  that the positive integer pair, $(x_d,y_d) = \left(k + \ell \cdot
  \left(\dfrac{k^2+1}{d}\right),\ k\ell +d \right)$ is a solution to
  (\ref{E1}).  First note that $y_d = k\ell + d$, is relatively prime to
  $\ell$.  Indeed, if $y_d$ and $\ell$ had a prime factor $q$ in common then
  $q$ would divide $y_d - k\ell = d$; and thus (since $d$ is a divisor of
  $k^2+1$) $y_d - k\ell = d$, then $q$ would divide $k^2+1$ contrary to the
  hypothesis that gcd$(\ell, k^2+1)=1$.  Thus, gcd$(\ell, y_d)=1$.  

It is clear that since $k, \ell$ and $d$ are positive integers, we have
\linebreak
$x_d > 1, y_d > 1 $ and $k \geq 1$.  So, 

\begin{equation}\left(0 < \dfrac{1}{x_d} < 1,\   0 < \dfrac{\ell}{y_d} < 1,\   0< \dfrac{1}{k} \leq 1 \right). \label{E2}
\end{equation}

Let 

\begin{equation}
\theta_1 = \arctan \left(\dfrac{1}{x_d}\right),\ \theta_2 = \arctan
 \left(\dfrac{1}{y_d}\right),\ \theta = \arctan \left(\dfrac{1}{k} \right).
 \label{E3}
\end{equation}

\noindent Then, by (\ref{E2}), (\ref{E3}) and part (b) of the trigonometric
preliminaries, we have 

\begin{equation}
\left\{\begin{array}{l} 0 < \theta_1 < \dfrac{\pi}{4},\ 0 < \theta_2 <
  \dfrac{\pi}{4},\ 0 < \theta \leq \dfrac{\pi}{4}\\
\\
{\rm and} \ 0 < \theta_1 + \theta_2 < \dfrac{\pi}{2},\ \tan \theta_1 =
\dfrac{1}{x_d},\ \tan \theta_2 = \dfrac{\ell}{y_d},\ \tan \theta = \dfrac{1}{k} \end{array}\right\} \label{E4}
\end{equation}

\noindent From (\ref{E4}) and part (a) of trigonometric preliminaries, it
follows that 

\begin{equation}
\begin{array}{rcl} \tan(\theta_1 + \theta_2) & = & \dfrac{\frac{1}{x_d} +
    \frac{\ell}{y_d}}{1- \frac{1}{x_d} \cdot \frac{\ell}{y_d}};\\
\\
\tan(\theta_1 + \theta_2) & = & \dfrac{y_d+\ell x_d}{x_dy_d - \ell};\\
\\
\tan(\theta_1 + \theta_2) & = & \dfrac{d\cdot (y_d + \ell x_d)}{dx_d y_d-
  d\ell}.\end{array}
\label{E5}
\end{equation}

\noindent By (\ref{E5}) and the expressions for $x_d$ and $y_d$ (see beginning
of the proof) we get 

\begin{equation}
\begin{array}{rcl} \tan (\theta_1 + \theta_2) & = & \dfrac{d^2 +k\ell d + k
\ell d + \ell^2 \cdot (k^2+1)}{[dk + \ell(k^2+1)] (k\ell + d) - d\ell};\\
\\
\tan(\theta_1 + \theta_2) & = & \dfrac{d^2 + 2k\ell d + \ell^2 \cdot
  (k^2+1)}{d\ell k^2 + k\ell^2 (k^2+1) + kd^2 + \ell dk^2 + d\ell - d\ell};\\
\\
\tan(\theta_1+\theta_2) & = & \dfrac{ d^2 + 2k\ell d + \ell^2 \cdot (k^2+1)}{k
  \cdot [2dk\ell + d^2 + \ell^2(k^2+1)]} = \dfrac{1}{k} = \tan \theta;\\
\\
\tan (\theta_1+\theta_2) & = & \tan \theta \end{array}\label{E6}
\end{equation}

\noindent By (\ref{E6}) and part (b) of the trigonometric preliminaries, it
follows that $\theta_1 + \theta_2 = \theta$, which combined with (\ref{E3}),
clearly establishes that the pair $(x_d,y_d)$ is a solution to (\ref{E1}).

Now, the converse.  Suppose that $(x,y)$ is a positive integer solution to
(\ref{E1}).

\noindent Then

\begin{equation} \left( 0 < \dfrac{1}{x} \leq 1,\ \ 0 < \dfrac{\ell}{y} \leq
  \ell,\ \ 0 < \dfrac{1}{k} \leq 1\right) \label{E7}
\end{equation}

Using (\ref{E7}), the trigonometric preliminaries, parts (a) and (b) and by
taking tangent of both sides of (\ref{E1}), we obtain,

$$
\dfrac{\frac{1}{x} + \frac{\ell}{y}}{1- \frac{1}{x} \frac{\ell}{y}} =
\dfrac{1}{k}
$$

\noindent or equivalently

\noindent (Note that since $0 < \dfrac{1}{k} \leq 1$.  The equal sides of (1)
can be utmost equal to $\dfrac{\pi}{4}$ )

\begin{equation} \begin{array}{rcl} xy-\ell & = & k(y+x\ell);\\
\\
y\cdot (x-k) & = & \ell\cdot (1+kx)
\end{array}
\label{E8}
\end{equation}

Equation (\ref{E8}) shows that $y$ is a divisor of the product $\ell(1+kx)$.
But, by (\ref{E1}), we know that gcd$(\ell , y) = 1$.  Thus, by Result 1
(Euclid's lemma), it follows that $y$ must divide $1+kx$; and so,

\begin{equation}
\left\{ \begin{array}{l} 1+kx = y \cdot v \\
\\
v \ {\rm a\ positive \ integer} \end{array}\right\}\label{E9}
\end{equation}

\noindent By (\ref{E9}) and (\ref{E8}) we have that,

\begin{equation} x = \ell \cdot v + k \label{E10}
\end{equation}

\noindent From (\ref{E9}) and (\ref{E10}) we further get 

$$ 1+k(\ell v + k) = yv;$$

\noindent or equivalently

\begin{equation} k^2+1 = (y-\ell k ) \cdot v \label{E11}
\end{equation}

Since $v$ is a positive integer, equation (\ref{E11}) shows that $(y-\ell k)$
is a positive integer divisor of $k^2+1$.  Let $y - \ell k = d$,  $d$ a
positive divisor of $k^2+1$.  Then $y = \ell k + d$ and by (\ref{E11}) and
(\ref{E10}) we also get

$$
x = k+ \ell \cdot \left( \dfrac{k^2+1}{d} \right),$$

\noindent which proves that the solution $(x,y)$ has the required form.  

Finally, we see by inspection that the $N$ (number of positive divisors of
$k^2+1$) positive integer solutions to (\ref{E1}) are distinct since,
obviously, all the $Ny$-coordinates are distinct.  The proof is
complete. 
\end{enumerate}
\end{proof}

\section{ A listing of nine equalities}

Let $k$ and $\ell$ be positive integers such that gcd$(\ell, k^2+1)=1$.
Applying Theorem 1 with $d =1$ and $d=k^2+1$ produces two inequalities.

\begin{enumerate} \item[1.] $\arctan \left( \dfrac{1}{k+\ell (k^2+1)}\right) +
  \arctan \left(\dfrac{\ell}{k\ell +1}\right) = \arctan \left(\dfrac{1}{k}
  \right)$

\item[2.]  $\arctan \left( \dfrac{1}{k+\ell} \right) + \arctan \left(
  \dfrac{\ell}{k\ell + k^2+1}\right) = \arctan \left(\dfrac{1}{k}\right)$
\end{enumerate}

Next, applying Theorem 1 with $k = \ell=1$, produces the equality:

\begin{enumerate}\item[3.] $\arctan \left(\dfrac{1}{3}\right) + \arctan
  \left(\dfrac{1}{2} \right) = \dfrac{\pi}{4}$
\end{enumerate}

\noindent For $\ell = 1$ and $k =2$:

\begin{enumerate}
\item[4.]  $\arctan \left(\dfrac{1}{3}\right) + \arctan \left(\dfrac{1}{7}
  \right) =\arctan \left( \dfrac{1}{2}\right)$.

\end{enumerate}

\noindent For $\ell = 1$ and $k = 3$

\begin{enumerate}
\item[5.]  $\arctan \left( \dfrac{1}{11} \right) + \arctan \left( \dfrac{1}{4}
  \right) = \arctan \left( \dfrac{1}{3} \right)$

\item[6.] $\arctan \left(\dfrac{1}{8} \right) + \arctan \dfrac{1}{5} =
  \arctan \left(\dfrac{1}{3} \right) $
\end{enumerate}

\noindent For $\ell =2$  and $k = 4$:

\begin{enumerate}
\item[7.]  $\arctan \left(\dfrac{1}{38}\right) + \arctan \left(\dfrac{2}{9}
  \right) = \arctan \left(\dfrac{1}{4}\right)$

\item[8.]  $\arctan \left(\dfrac{1}{6} \right) + \arctan
  \left(\dfrac{2}{25}\right) = \arctan \left(\dfrac{1}{4} \right)$
\end{enumerate}

\noindent For $\ell = 1$ and $k = 6$:

\begin{enumerate}
\item[9.]  $\arctan \left( \dfrac{1}{43} \right) + \arctan
  \left(\dfrac{1}{7}\right) = \arctan \left( \dfrac{1}{6}\right)$
\end{enumerate}


\begin{thebibliography}{99}
\bibitem[1]{1} Unal, Hasan, {\it Proof without words:  an arctangent equality}, Mathematics and Computer Education, Fall 2011, Vol. 45, No. 3, p 197.

\bibitem[2]{2} Rosen, Kenneth H., {\it Elementary Number Theory and Its
  Applications}, 5th edition, Pearson, Addison Wesley, 2005.

\noindent For Result 1 (Lemma 3.4 in the above book), see page 109.

\noindent For Result 2 (Theorem 7.9 in the above book), see page 252.
\end{thebibliography}
\end{document}